\newcounter{theorem}
\newtheorem{theorem}{Theorem}
\newtheorem{corollary}{Corollary}
\newtheorem{definition}{Definition}
\newtheorem{lemma}{Lemma}
\newtheorem{proposition}{Proposition}
\newenvironment{proof}[1][Proof]{\textbf{#1.} }{\rule{0.5em}{0.5em}}
\begin{document}

\title{Tight Span of Path Connected Subsets of the Manhattan Plane}
\author{Mehmet KILIÇ and Şahin KOÇAK}
\date{ }
\maketitle

\begin{abstract}
We show that the tight span of a path connected subset of the
Manhattan plane can be constructed in a very simple way by
"hatching" the subset in horizontal and vertical directions and
then taking the closure of the resulting set.
\end{abstract}

\section{Introduction}

The hyperconvex hull of a metric space defined by Aronszajn and Panitchpakdi~\cite{aro}, the injective hull of a metric space defined by Isbell~\cite{isb}
and the tight span of a metric space defined by Dress~\cite{dre}, all turned out to be isometric to each other and they play an important role in metric geometry. Especially in the form of tight span, they proved to be instrumental in phylogenetic modelling. It is however a nontrivial task to construct the tight span of a given concrete metric space. So far as we know, there are no general constructions going beyond finite metric spaces. The definition itself uses a huge ambient space to embed the tight span
and it seems desirable to have examples of infinite metric spaces with a low dimensional tight span description.

Eppstein~\cite{epp} considers the subsets of the Manhattan plane
and gives a tight span construction, which works under certain
conditions.

In this note we consider any path connected subsets of the Manhattan plane and give a surprisingly simple description of the tight span using a procedure of ``hatching'' in the plane (see Theorem~\ref{35}).

We now want to recall briefly the definitions of the relevant notions. A metric space $(X,d)$ is called hyperconvex if for any collection $(x_i)_{i\in I}$ of points in $X$ and any collection $(r_i)_{i\in I}$ of nonnegative real numbers satisfying $d(x_i,x_j)\leq r_i+r_j$ for all $i,j\in I$, the intersection of closed balls around $x_i$ with radius $r_i$ is nonempty \cite{aro}.

A metric space $(X,d)$ is called injective, in the wording of Isbell, if every mapping which increases no distance from a subspace of any metric space $(Y,d')$ to $(X,d)$ can be extended, increasing no distance, over $(Y,d')$ \cite{isb}.

A metric space is hyperconvex if and only if it is injective and every metric space can be embedded into a minimal hyperconvex (injective) space called the hyperconvex (injective) envelope of the given metric space (unique up to isometry). We note the every useful property that a metric space $X$ is hyperconvex (injective) iff any isometric embedding $f:X\rightarrow X\cup\{y\}$, where $y\notin X$, has a nonexpansive retraction \cite{esp}.

A hyperconvex metric space is complete \cite{esp} and strictly
intrinsic \cite{kil} in the sense that between any two points
there is a geodesic, i.e. a path with length realising the
distance between these points (see for these notions \cite{bur}
and \cite{pap}).

The tight span of a metric space is nothing else than the injective envelope of the metric space, rediscovered by Dress \cite{dre}. The following can be taken as an explicit definition of tight span: Let $(X,d)$ be any metric space and consider the set of pointwise minimal functions $f:X\rightarrow \mathbb{R}^{\geq0}$ satisfying the property: \[f(x)+f(y)\geq d(x,y)\] for all $x,y\in X$. The tight span $T(X)$ of $X$ is then this set of functions with the supremum metric: \[d_{\infty}(f,g)=\sup_{x\in X}|f(x)-g(x)|.\]

We have previously shown that for any nonempty subset $A\subseteq\mathbb{R}_1^2$ of the Manhattan plane, any closed, geodesically convex (i.e. any two points of $A$ connected with a geodesic lying in $A$) and minimal (with regard these properties) subset $B\subseteq\mathbb{R}_1^2$ containing $A$ is isometric to the tight span $T(A)$ \cite{kil}.

In the present note we explicitly construct the tight span of a path connected subset $A$ of the Manhattan plane by using a simple procedure of ``hatching'' (see below for the definition of hatching). The tight span of $A$ will be the closure of the result of hatching the set $A$ successively in both axis-directions (see Theorem~\ref{35}). For a compact and path-connected set $A$, the tight span is simply the double-hatched set $A$.

\section{Hatching the Subsets of The Manhattan Plane}

We define the following two operations $L_x$ and $L_y$ on subsets of $\mathbb{R}_1^2=(\mathbb{R}^2,d_1)$ with $d_1((x_1,y_1),(x_2,y_2))=|x_1-x_2|+|y_1-y_2|$, called the Manhattan plane or the taxi plane. Informally $L_x(A)$ will be the result of hatching the set $A$ in the $x$ direction (similarly $L_y(A)$ in the $y$ direction) of the set $A\subseteq \mathbb{R}_1^2$. See, for example, Figure~\ref{3f2} for the action of $L_x$ and $L_y$ on the set  $A=\{e^{it}|\ t\in[0,\frac{3\pi}{2}]\}$

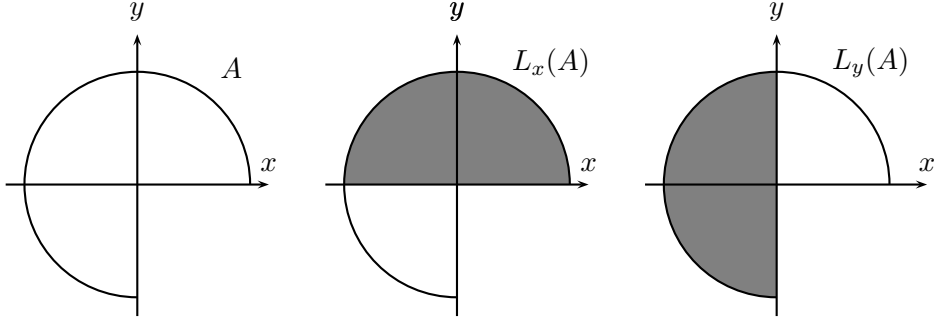
\begin{figure}[h]
\begin{center}
\begin{pspicture}(-5.25,-6.25)(7.5,-2)
\uput[u](-1.75,-4.5){$x$} \uput[u](-3.5,-2.5){$y$}
\psarc(-3.5,-4.5){1.5}{0}{270}
\uput[u](-2.25,-3.25){$A$}
\psline{->}(-5.25,-4.5)(-1.75,-4.5)
\psline{->}(-3.5,-6.25)(-3.5,-2.5) \uput[u](0.75,-2.5){$y$}
\uput[u](2.5,-4.5){$x$} \uput[u](0.75,-2.5){$y$}
\psarc(0.75,-4.5){1.5}{180}{270}
\psarc*[linecolor=gray](0.75,-4.5){1.5}{0}{180}
\psarc(0.75,-4.5){1.5}{0}{180} \uput[u](2,-3.25){$L_x(A)$}
\uput[u](7,-4.5){$x$} \psline{->}(-1,-4.5)(2.5,-4.5)
\psline{->}(0.75,-6.25)(0.75,-2.5) \uput[u](5,-2.5){$y$}
\psarc*[linecolor=gray](5,-4.5){1.5}{90}{270}
\psarc(5,-4.5){1.5}{0}{270} \uput[u](6.25,-3.25){$L_y(A)$}
\psline{->}(3.25,-4.5)(7,-4.5) \psline{->}(5,-6.25)(5,-2.5)
\end{pspicture}
\caption{Action of the hatching operators $L_x$ and $L_y$ on the set  $A$.}
\label{3f2}
\end{center}
\end{figure}

To give a more formal definition, let $A_x=\{p=(p_1,p_2)\in A\ |\ p_1=x\}$ and $A^y=\{p=(p_1,p_2)\in A\ |\ p_2=y\}$ for $A\subseteq\mathbb{R}_1^2$.
Given any subset $X\subseteq \mathbb{R}_1^2$ lying on a horizontal or vertical line in $\mathbb{R}_1^2$, denote the
minimal segment (possibly infinite or empty) containing the set $X$ and contained in the same horizontal or vertical line by $[X]$.
We can now define
\[
L_x(A)=\bigcup_{y\in\mathbb{R}}[A^y]
\]
and
\[
L_y(A)=\bigcup_{x\in\mathbb{R}}[A_x].
\]
We call the set $L_x(A)$ the hatching of $A$ in the $x$- direction and $L_y(A)$ the hatching of $A$ in the $y$- direction.
Note that $A\subseteq L_x(A)$ and $A\subseteq L_y(A)$.

We want to fix some notations for later use:

\begin{definition}
For $p=(p_1,p_2)\in\mathbb{R}_1^2$ and $\varepsilon_1,\varepsilon_2=\pm$, we call the set
\[
Q_p^{\varepsilon_1\varepsilon_2}=\{q=(q_1,q_2)\in \mathbb{R}_1^2\ |\ \varepsilon_i(q_i-p_i)\geq 0,\ i=1,2\}
\]
the $\varepsilon_1\varepsilon_2-$quadrant of $p$. For $A\subseteq\mathbb{R}_1^2$, $p\in\mathbb{R}_1^2$
and $\varepsilon_1,\varepsilon_2=\pm$, we denote the set $A\cap Q_p^{\varepsilon_1,\varepsilon_2}$ by $A_p^{\varepsilon_1,\varepsilon_2}$.

Furthermore we define for $p=(p_1,p_2)\in\mathbb{R}_1^2$
\[x_p=\{(p_1+t,p_2)\ |\ t\in\mathbb{R}\},\]
\[y_p=\{(p_1,p_2+t)\ |\ t\in\mathbb{R}\},\]
\[x_p^{\varepsilon}=\{(p_1+\varepsilon t,p_2)\ |\ t\geq0\},\]
\[y_p^{\varepsilon}=\{(p_1,p_2+\varepsilon t)\ |\ t\geq0\}.\]

We call the set $x_p^{\varepsilon}\cup y_p^{\delta}$ the $\varepsilon-\delta-$elbow of $p$.
\end{definition}

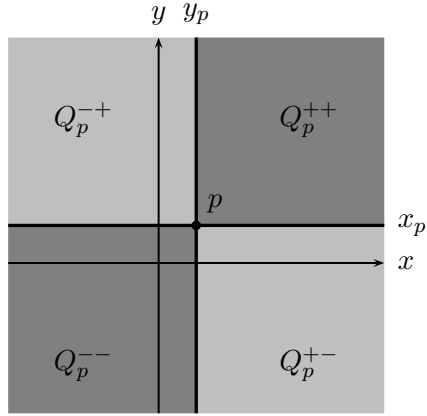
\begin{figure}[h]
\begin{center}
\begin{pspicture}(-2,-2)(3.25,3.25)
\pspolygon*[linecolor=gray](0.5,0.5)(0.5,3)(3,3)(3,0.5)
\pspolygon*[linecolor=lightgray](0.5,0.5)(0.5,3)(-2,3)(-2,0.5)
\pspolygon*[linecolor=lightgray](0.5,0.5)(0.5,-2)(3,-2)(3,0.5)
\pspolygon*[linecolor=gray](0.5,0.5)(0.5,-2)(-2,-2)(-2,0.5)
\uput[u](0,3){$y$} \psline[linewidth=0.75pt]{->}(0,-2)(0,3)
\psline[linewidth=0.75pt]{->}(-2,0)(3,0) \uput[r](3,0){$x$}
\psdot(0.5,0.5)
\psline[linewidth=1.25pt](0.5,-2)(0.5,3)
\psline[linewidth=1.25pt](-2,0.5)(3,0.5)
\uput[u](0.5,3){$y_p$}
\uput[r](3,0.5){$x_p$}
\uput[u](0.75,0.5){$p$}
\uput[u](2,1.5){$Q_p^{++}$}
\uput[u](-1,1.5){$Q_p^{-+}$}
\uput[u](2,-1.75){$Q_p^{+-}$}
\uput[u](-1,-1.75){$Q_p^{--}$}
\end{pspicture}
\caption{Quadrants of the point $p$.} \label{3f1}
\end{center}
\end{figure}

The operations $L_x$ and $L_y$ do not commute generally. For
example, for $A=\{(0,0),(2,0),(1,1)\}$, $(L_x\circ
L_y)(A)\neq(L_y\circ L_x)(A)$ (see Figure~\ref{3f3}). But for a
path connected set, these operations do commute as the following
proposition shows.

\begin{figure}[h]
\begin{center}
\begin{pspicture}(-0.5,-1.75)(10.5,3.5)
\psline[linewidth=0.5pt]{->}(0,-0.5)(0,1.75)
\psline[linewidth=0.5pt]{->}(-0.5,0)(2.5,0)
\uput[u](2.5,0){$x$} \uput[r](0,1.75){$y$}
\psdots(0,0)(1,1)(2,0)
\uput[u](2,1){$A$}
\psline[linewidth=0.5pt]{->}(4,1)(4,3.25)
\psline[linewidth=0.5pt]{->}(3.5,1.5)(6.5,1.5)
\psline[linewidth=1.5pt](4,1.5)(6,1.5)
\uput[u](6.5,1.5){$x$} \uput[r](4,3.25){$y$}
\psdots(4,1.5)(5,2.5)(6,1.5)
\uput[u](6,2.5){$L_x(A)$}
\psline[linewidth=0.5pt]{->}(8,1)(8,3.25)
\psline[linewidth=0.5pt]{->}(7.5,1.5)(10.5,1.5)
\psline[linewidth=1.5pt](8,1.5)(10,1.5)
\psline[linewidth=1.5pt](9,2.5)(9,1.5)
\uput[u](10.5,1.5){$x$} \uput[r](8,3.25){$y$}
\psdots(8,1.5)(9,2.5)(10,1.5)
\uput[u](10,2.5){$L_y(L_x(A))$}
\psline[linewidth=0.5pt]{->}(4,-2)(4,0.25)
\psline[linewidth=0.5pt]{->}(3.5,-1.5)(6.5,-1.5)
\uput[u](6.5,-1.5){$x$} \uput[r](4,0.25){$y$}
\psdots(4,-1.5)(5,-0.5)(6,-1.5)
\uput[u](6,-0.5){$L_y(A)$}
\psline[linewidth=0.5pt]{->}(8,-2)(8,0.25)
\psline[linewidth=0.5pt]{->}(7.5,-1.5)(10.5,-1.5)
\psline[linewidth=1.5pt](8,-1.5)(10,-1.5)
\uput[u](10.5,-1.5){$x$} \uput[r](8,0.25){$y$}
\psdots(8,-1.5)(9,-0.5)(10,-1.5)
\uput[u](10,-0.5){$L_x(L_y(A))$}
\end{pspicture}
\caption{The image of $A$ under $L_x$, $L_y$ and the different compositions of $L_x$ and $L_y$ yielding unequal sets.}
\label{3f3}
\end{center}
\end{figure}
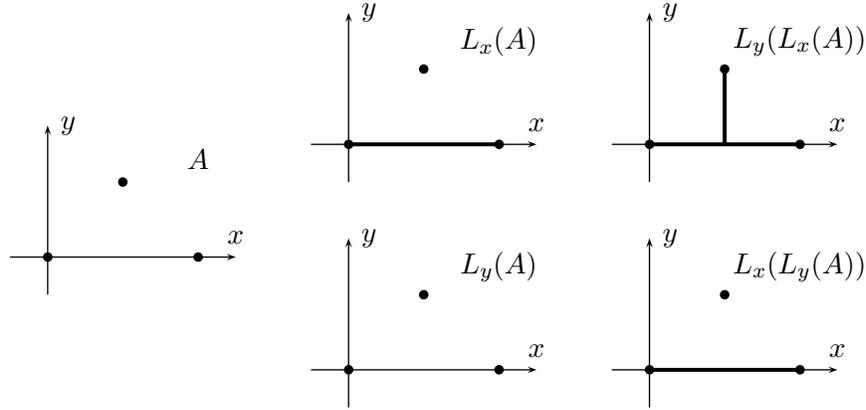

\begin{proposition}
\label{305}
For a path connected subset $A$ of $\mathbb{R}_1^2$, it holds
\[(L_x\circ L_y)(A)=(L_y\circ L_x)(A).\]
\end{proposition}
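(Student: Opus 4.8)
The plan is to deduce the proposition from the single set identity
\[
L_y(L_x(A)) = L_x(A) \cup L_y(A),
\]
which I claim holds for every path connected $A \subseteq \mathbb{R}_1^2$. Granting this, the proposition follows at once: its right-hand side is symmetric in the two coordinate axes, and reflecting in the diagonal $\{x=y\}$ interchanges the operators $L_x$ and $L_y$ while preserving path connectedness, so the same identity read for the reflected set gives also $L_x(L_y(A)) = L_x(A) \cup L_y(A)$; comparing the two, $(L_x \circ L_y)(A) = (L_y \circ L_x)(A)$.

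The inclusion $L_x(A) \cup L_y(A) \subseteq L_y(L_x(A))$ is formal and uses nothing about $A$: one always has $S \subseteq L_y(S)$, so $L_x(A) \subseteq L_y(L_x(A))$, and $L_x, L_y$ are monotone with $A \subseteq L_x(A)$, so $L_y(A) \subseteq L_y(L_x(A))$. Hence all the content is in the reverse inclusion $L_y(L_x(A)) \subseteq L_x(A) \cup L_y(A)$, and this is the only place path connectedness is needed.

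To prove that reverse inclusion I would fix $q = (q_1,q_2) \in L_y(L_x(A))$ with $q \notin L_x(A)$ and show $q \in L_y(A)$. Since $q$ lies in the vertical filling $L_y(L_x(A))$ of $L_x(A)$ but not in $L_x(A)$ itself, it sits strictly between two points of $L_x(A)$ on the vertical line $x = q_1$: there are heights $y_- < q_2 < y_+$ with $(q_1,y_-), (q_1,y_+) \in L_x(A)$. By the definition of $L_x$, the point $(q_1,y_-)$ being in $L_x(A)$ means that $A$ meets the horizontal line of height $y_-$ both weakly to the left and weakly to the right of $q_1$; choose $a \in A$ with $a_2 = y_-$ and $a_1 \le q_1$, and likewise $c \in A$ with $c_2 = y_+$ and $c_1 \le q_1$. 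If $A$ had a point of height $q_2$ with first coordinate $\le q_1$ and also one with first coordinate $\ge q_1$, then $q \in L_x(A)$, against our assumption; so, the other case being handled symmetrically (using the points of $A$ on the two horizontal lines that lie to the \emph{right} of $q_1$), I may assume that $A$ has no point of height $q_2$ with first coordinate $\le q_1$.

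The crux is a path-chasing argument. Take a path $\gamma:[0,1]\to A$ with $\gamma(0)=a$ and $\gamma(1)=c$; its second coordinate runs continuously from $y_- < q_2$ up to $y_+ > q_2$, hence equals $q_2$ somewhere, and I let $t_\rho$ be the least parameter at which this happens, so that the second coordinate stays below $q_2$ on $[0,t_\rho)$. By the standing assumption the point $\gamma(t_\rho)$, which has height $q_2$, must have first coordinate greater than $q_1$; as $\gamma(0)=a$ has first coordinate at most $q_1$, the intermediate value theorem yields a parameter in $[0,t_\rho)$ at which $\gamma$ has first coordinate exactly $q_1$ and height below $q_2$ --- that is, $A$ meets the line $x=q_1$ at a point below $q$. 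Applying the same reasoning to $\gamma$ traversed from $c$ to $a$, where now the second coordinate stays above $q_2$ until its first return to $q_2$, produces a point of $A$ on the line $x=q_1$ above $q$. Therefore the vertical section $A_{q_1}$ contains points with height on either side of $q_2$, so $q \in [A_{q_1}] \subseteq L_y(A)$, as required. I expect the main obstacle to be precisely this path-chasing: making sure the first-crossing parameters are well defined and that the intermediate value theorem is applied to the correct coordinate on the correct subinterval. A minor technical point, easily dealt with by a limiting argument, is that if some minimal segment $[X]$ should fail to attain one of its endpoints then the auxiliary points $a, c$ (and their right-hand counterparts) must be chosen slightly more carefully.
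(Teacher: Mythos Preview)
Your proof is correct, and the route is genuinely different from the paper's, though the underlying engine is the same intermediate-value-theorem path chase.

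The paper argues the inclusion $L_y(L_x(A))\subseteq L_x(L_y(A))$ directly by contradiction: given $p\in L_y(L_x(A))\setminus L_x(L_y(A))$, one has $p\notin L_y(A)$, so without loss of generality $y_p^{+}\cap A=\emptyset$; a point $u\in y_p^{+}\cap L_x(A)$ then yields $s,t\in A$ at the height of $u$ on opposite sides of $y_p$, and any path in $A$ from $s$ to $t$, being forbidden to cross $y_p^{+}$, must cross both $x_p^{-}$ and $x_p^{+}$, placing $p$ in $L_x(A)\subseteq L_x(L_y(A))$. So the paper uses \emph{one} path between two points at the \emph{same} height on \emph{opposite} sides and a topological ``must go around'' argument.

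You instead prove the stronger identity $L_y(L_x(A))=L_x(A)\cup L_y(A)$ and read off commutativity by the diagonal symmetry. Your path chase is configured differently: two points $a,c$ on the \emph{same} side at heights \emph{below and above} $q$, with first-crossing arguments at each end of the path producing points of $A$ on the vertical line through $q$ on both sides. This costs two applications of the IVT rather than one, but the payoff is a clean structural description of $L(A)$ as $L_x(A)\cup L_y(A)$, which the paper does not state (though, as you may notice, its argument actually lands $p$ in $L_x(A)$ and so implicitly proves the same identity). Your caveat about endpoints of $[X]$ is in fact moot under the convex-hull reading of ``minimal segment'': if $q\in[X]$ then there already exist $x_1,x_2\in X$ with $x_1\le q\le x_2$, so no limiting argument is needed.
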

\begin{proof}
Assume to the contrary that, for example, there exists a point $p\in (L_y\circ L_x)(A)$ not contained in
$(L_x\circ L_y)(A)$. Since $p\notin L_x (L_y(A))$, the point $p$ can not belong to $L_y(A)$ and consequently
at least one of the half-rays $y_p^+$ and $y_p^-$ does not intersect $A$ (otherwise the point $p$
would belong to $L_y(A)$). Now assume without loss of generality that $y_p^+$ does not intersect $A$.
Since $p\in L_y(L_x(A))$, there must exist points $u,v\in L_x(A)$ with $u\in y_p^+$ and $v\in y_p^-$. Then,
$u\in L_x(A)$ must lie on a certain horizontal segment $[A^y]$. Choose two points $s,t\in A$ on this segment lying
in $Q_p^{-+}$ and $Q_p^{++}$ respectively (see Figure~\ref{3f4}). Since $A$ is path connected, there is a path in $A$ connecting
these two points. As $y_p^+\cap A=\emptyset$, this path has to intersect the half-rays $x_p^-$ and $x_p^+$.
If we denote the intersection points by $q$ and $r$, then $q,r\in A\subseteq L_y(A)$ and this implies
$p\in L_x(L_y(A))$, contradicting our assumption.
\end{proof}

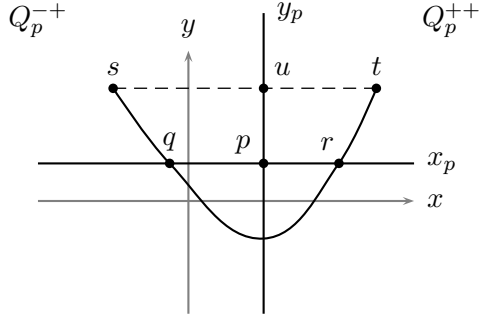
\begin{figure}[h]
\begin{center}
\begin{pspicture}(-2,-1.5)(3.5,2.5)
\psline[linecolor=gray]{->}(-2,0)(3,0)
\psline[linecolor=gray]{->}(0,-1.5)(0,2) \uput[r](3,0){$x$}
\uput[u](0,2){$y$} \psdots(1,0.5)(-1,1.5)(2.5,1.5)(1,1.5)
\uput[u](-1,1.5){$s$}
\uput[u](2.5,1.5){$t$}
\uput[u](1.25,1.5){$u$}
\uput[l](1,0.75){$p$}
\psline[linestyle=dashed,linewidth=0.5pt](-1,1.5)(2.5,1.5)
\psline(-2,0.5)(3,0.5)
\psline(1,-1.5)(1,2.5) \uput[r](3,0.5){$x_p$}
\uput[r](1,2.5){$y_p$}
\pscurve(-1,1.5)(-0.25,0.5)(1,-0.5)(2,0.5)(2.5,1.5)
\psdots(-0.25,0.5)(2,0.5) \uput[u](-0.25,0.5){$q$}
\uput[u](1.85,0.5){$r$}
\uput[u](-2,2){$Q_p^{-+}$}
\uput[u](3.5,2){$Q_p^{++}$}
\end{pspicture}
\caption{Why $L_x$ and $L_y$ commute for path connected subsets.}
\label{3f4}
\end{center}
\end{figure}

We can now define the ``double hatching'' of connected subsets of the Manhattan plane:
\begin{definition}
For a path connected subset $A\subseteq \mathbb{R}_1^2$, we define $L(A)=(L_x\circ L_y)(A)=(L_y\circ L_x)(A)$ and call it the double hatching of $A$.
\end{definition}

\begin{lemma}
\label{31}
Let $A\subseteq\mathbb{R}_1^2$ be a path connected subset and $p\in \mathbb{R}_1^2$. If every quadrant of the point $p$
contains a point of the set $A$, then $p\in L(A)$.
\end{lemma}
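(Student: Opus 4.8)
The plan is to first dispose of two trivial cases, then reduce by symmetry to one normalized situation, and finally extract from a single path in $A$ two points of $A$ on the horizontal line through $p$ lying on opposite sides of $p$.

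\emph{Trivial reductions.} If $p\in A$, then $p\in A\subseteq L_y(A)\subseteq L_x(L_y(A))=L(A)$ and we are done, so I would assume $p\notin A$. Likewise, if $p\in L_y(A)$, then $p\in L_y(A)\subseteq L_x(L_y(A))=L(A)$, so I would assume also $p\notin L_y(A)$. Unwinding the definition of $L_y$, the condition $p\notin L_y(A)$ means exactly $p\notin[A_{p_1}]$, i.e. all points of $A$ on the vertical line through $p$ lie strictly on one side of $p$; equivalently, one of $y_p^{+}\cap A$ and $y_p^{-}\cap A$ is empty. Since the hypothesis is symmetric under reflection in the horizontal line through $p$, I may assume without loss of generality that $y_p^{+}\cap A=\emptyset$.

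\emph{The main step.} Pick $a\in A\cap Q_p^{++}$ and $d\in A\cap Q_p^{-+}$ (which exist by hypothesis) and a path $\gamma=(\gamma_1,\gamma_2):[0,1]\to A$ with $\gamma(0)=a$ and $\gamma(1)=d$. Because $a$ and $d$ have second coordinate $\geq p_2$ and $y_p^{+}\cap A=\emptyset$, neither $a$ nor $d$ lies on the vertical line $\{q_1=p_1\}$, so $\gamma_1(0)>p_1>\gamma_1(1)$. Consider the nonempty closed set $Z=\{t\in[0,1]:\gamma_1(t)=p_1\}$. For each $t\in Z$ the point $\gamma(t)$ lies in $A$ on that vertical line, hence $\gamma_2(t)<p_2$, since otherwise $\gamma(t)\in y_p^{+}\cap A$. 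Put $t^{*}=\min Z$ and $t^{**}=\max Z$: then $0<t^{*}\leq t^{**}<1$, $\gamma_1>p_1$ on $[0,t^{*})$ and $\gamma_1<p_1$ on $(t^{**},1]$, while $\gamma_2(0)\geq p_2>\gamma_2(t^{*})$ and $\gamma_2(1)\geq p_2>\gamma_2(t^{**})$. The intermediate value theorem applied to $\gamma_2$ on $[0,t^{*}]$ and on $[t^{**},1]$ then produces parameters $t'\in[0,t^{*})$ and $t''\in(t^{**},1]$ with $\gamma_2(t')=\gamma_2(t'')=p_2$, $\gamma_1(t')>p_1$ and $\gamma_1(t'')<p_1$.

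\emph{Conclusion and the hard part.} The points $\gamma(t')$ and $\gamma(t'')$ lie in $A^{p_2}$, one strictly to the right of $p$ and one strictly to the left, so $p$ lies in the minimal horizontal segment $[A^{p_2}]$; hence $p\in L_x(A)\subseteq L_x(L_y(A))=L(A)$, as desired. I expect the only genuine difficulty to be the middle step: squeezing \emph{two} crossing points, one on each side of $p$, out of a \emph{single} path. This is handled by looking at the first and last instants at which $\gamma$ meets the vertical line through $p$ and using that along that line $\gamma$ is forced to stay strictly below $p$. (The remaining two quadrant points are needed only to run the mirror-image argument in the case $y_p^{-}\cap A=\emptyset$.)
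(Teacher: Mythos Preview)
Your proof is correct. A minor quibble: the word ``equivalently'' in your trivial reduction is too strong---if $A_{p_1}$ accumulates at $p$ from one side only, then all points of $A$ on the vertical line lie strictly on one side of $p$ yet $p\in[A_{p_1}]$. But you only use the forward implication $p\notin[A_{p_1}]\Rightarrow$ one of $y_p^{\pm}\cap A$ is empty, and that direction is fine.

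Your route differs from the paper's. The paper uses \emph{two} paths, one joining the diagonal pair $A_p^{++}\ni a_1$ to $a_3\in A_p^{--}$ and one joining $a_2\in A_p^{-+}$ to $a_4\in A_p^{+-}$; a short combinatorial check shows that between them these two paths must hit both of $x_p^{\pm}$ or both of $y_p^{\pm}$, yielding $p\in L_x(A)$ or $p\in L_y(A)$. Your argument instead first spends the freedom in the problem by assuming $y_p^{+}\cap A=\emptyset$, and then extracts both crossing points from a \emph{single} path joining the two upper quadrants, via the first/last time the path meets the vertical line and two applications of the intermediate value theorem. This is essentially the mechanism the paper uses elsewhere (in the proof that $L_x$ and $L_y$ commute on path-connected sets), so you have in effect recycled that argument. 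The trade-off: the paper's two-path argument is more symmetric and avoids the preliminary reduction; yours uses only one path but needs the reduction step and more careful bookkeeping with $t^{*},t^{**}$. Both are short and elementary.
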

\begin{proof}
Let $a_1\in A_p^{++}$, $a_2\in A_p^{-+}$, $a_3\in A_p^{--}$, $a_4\in A_p^{+-}$. Since $A$ is path connected,
there exists a path $\alpha$ connecting $a_1$ and $a_3$; and a path $\beta$ connecting $a_2$ and $a_4$ (see Figure~\ref{3f5}). The path
$\alpha$ intersects either the pair of half-rays $x_p^-$ and $y_p^+$ or the pair of half-rays $x_p^+$ and $y_p^-$ (or both pairs).
Similarly, the path $\beta$ intersects either the pair of half-rays $x_p^+$ and $y_p^+$ or the pair of half-rays
$x_p^-$ and $y_p^-$. Without loss of generality, let us assume that the path $\alpha$ intersects $x_p^-$ and $y_p^+$
and the path $\beta$ intersects $x_p^+$ and $y_p^+$. Denote a point of intersection of $\alpha$ and $x_p^-$ by $q$
and a point of intersection of $\beta$ and $x_p^+$ by $r$. Since $q,r\in A$ the segment $[q,r]$ is contained in
$L_x(A)$, so that $p\in L_x(A)\subseteq L(A)$.

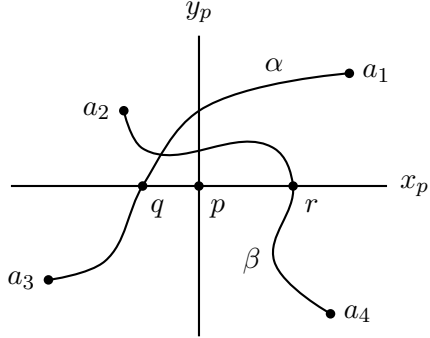
\begin{figure}[h]
\begin{center}
\begin{pspicture}(-2.5,-2)(3,2.5)
\psline(-2.5,0)(2.5,0)
\psline(0,-2)(0,2)
\psdots(0,0)(2,1.5)(1.75,-1.7)(-2,-1.25)(-1,1) \uput[d](0.25,0){$p$}
\uput[r](2,1.5){$a_1$} \uput[u](1,1.35){$\alpha$}
\uput[l](1,-1){$\beta$} \uput[r](1.75,-1.7){$a_4$}
\uput[l](-2,-1.25){$a_3$} \uput[l](-1,1){$a_2$}
\uput[r](2.5,0){$x_p$} \uput[u](0,2){$y_p$}
\pscurve(2,1.5)(0,1)(-0.75,0)(-1.25,-1)(-2,-1.25)
\pscurve(-1,1)(-0.75,0.5)(1,0.5)(1.25,0)(1,-1)(1.75,-1.7)
\uput[d](-0.55,0){$q$} \uput[d](1.5,0){$r$} \psdots(1.25,0)(-0.75,0)
\end{pspicture}
\caption{A point, whose quadrants intersect a set, is in the double hatching of the set.}
\label{3f5}
\end{center}
\end{figure}
\end{proof}

\begin{lemma}
\label{32}
Let $A\subseteq\mathbb{R}_1^2$ be a path connected subset, $p\in\mathbb{R}_1^2$ and $\varepsilon,\delta=\pm$.
Then $A_p^{\varepsilon\delta}$ is nonempty if and only if $(L(A))_p^{\varepsilon\delta}$ is nonempty.
\end{lemma}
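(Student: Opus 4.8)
The ``if'' direction is immediate: since $A\subseteq L_y(A)\subseteq L_x(L_y(A))=L(A)$ (using $A\subseteq L_y(A)$ and $B\subseteq L_x(B)$ with $B=L_y(A)$), a point of $A$ lying in $Q_p^{\varepsilon\delta}$ already lies in $L(A)$, so $A_p^{\varepsilon\delta}\subseteq (L(A))_p^{\varepsilon\delta}$. For the ``only if'' direction it is enough to treat $\varepsilon=\delta=+$: reflecting $\mathbb{R}_1^2$ in the horizontal and/or vertical line through $p$ gives isometries that fix $p$, permute its four quadrants, and (mapping axis-parallel lines to axis-parallel lines, with $[\,\cdot\,]$ equivariant) commute with $L_x$ and $L_y$, hence with $L$.

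So suppose $q=(q_1,q_2)\in (L(A))_p^{++}$, i.e. $q_1\geq p_1$, $q_2\geq p_2$ and $q\in L(A)=L_x(L_y(A))$. The plan is to unwind the two hatchings in turn. Since $q\in L_x(L_y(A))$, the point $q$ lies on the minimal horizontal segment spanned by the (necessarily nonempty) set $(L_y(A))^{q_2}$; writing $S$ for the set of $x$-coordinates of that set, $q_1$ lies in the minimal interval containing $S$. From this I would extract a witness $x^\ast\in S$ with $x^\ast\geq q_1\ (\geq p_1)$: if $q_1<\sup S$ this is just the definition of the supremum, while if $q_1=\sup S$ then, $q_1$ lying in the minimal interval containing $S$, that interval contains its own supremum, which forces $\sup S\in S$. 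Now $(x^\ast,q_2)\in L_y(A)$, so $q_2$ lies in the minimal interval containing $T:=\{\,y\mid (x^\ast,y)\in A\,\}$, and $q_2\geq p_2$ yields in the same way a witness $y^\ast\in T$ with $y^\ast\geq q_2\ (\geq p_2)$. Then $(x^\ast,y^\ast)\in A$ and $(x^\ast,y^\ast)\in Q_p^{++}$, so $A_p^{++}\neq\emptyset$.

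The one step that must be done carefully — the main (small) obstacle — is the extraction of $x^\ast$ and $y^\ast$: one has to exclude the degenerate case of a non-attained supremum, and this is precisely where it matters that $[\,\cdot\,]$ denotes the minimal, not necessarily closed, segment, so that whenever its supremum equals some $q_i\geq p_i$ that value is genuinely attained inside the set. Note that path connectedness of $A$ is used here only through Proposition~\ref{305}, which makes $L(A)=L_x\circ L_y(A)$ well defined and so legitimizes the unwinding above. Equivalently, one may argue by contraposition: if $A\cap Q_p^{++}=\emptyset$ then $A\subseteq\{x<p_1\}\cup\{y<p_2\}$, and a direct check (the same non-attainment remark reappearing) shows that both $L_x$ and $L_y$ preserve this region, whence $L(A)$ also misses $Q_p^{++}$.
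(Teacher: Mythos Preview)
The paper omits the proof entirely (``We omit the easy proof''), so there is no approach to compare against. Your argument is correct: the two-step unwinding of $L_x\circ L_y$, together with the observation that $q_1\in[S]$ and $q_1=\sup S$ forces $\sup S\in S$ because $[S]$ is the \emph{minimal} (not necessarily closed) interval containing $S$, is exactly what makes the extraction of $x^\ast$ and $y^\ast$ go through; your contrapositive alternative is equally valid and rests on the same point. One cosmetic slip: you have the ``if'' and ``only if'' labels reversed --- the trivial implication $A_p^{\varepsilon\delta}\neq\emptyset\Rightarrow (L(A))_p^{\varepsilon\delta}\neq\emptyset$ (via $A\subseteq L(A)$) that you establish first is the ``only if'' half of the biconditional as stated in the lemma.
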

(We omit the easy proof).

From Lemma~\ref{31} and Lemma~\ref{32} we get the following corollary:

\begin{corollary}
Let $A\subseteq\mathbb{R}_1^2$ be a path connected subset and $p\in \mathbb{R}_1^2$. If every quadrant
of the point $p$ intersects the set $L(A)$, then $p\in L(A)$.
\end{corollary}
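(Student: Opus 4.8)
The plan is to reduce the statement to Lemma~\ref{31} by using Lemma~\ref{32} to transfer the hypothesis from $L(A)$ back to $A$. Concretely, I would start from the assumption that every quadrant $Q_p^{\varepsilon\delta}$ of $p$ intersects $L(A)$, i.e. $(L(A))_p^{\varepsilon\delta}\neq\emptyset$ for all sign choices $\varepsilon,\delta=\pm$. Applying Lemma~\ref{32} in the direction ``$(L(A))_p^{\varepsilon\delta}$ nonempty $\Rightarrow$ $A_p^{\varepsilon\delta}$ nonempty'' to each of the four quadrants, one obtains that every quadrant of $p$ already contains a point of $A$.

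Since $A$ is path connected by hypothesis, Lemma~\ref{31} then applies directly and gives $p\in L(A)$, which is exactly the assertion. I do not expect any real obstacle here: the corollary is a one-line composition of the two preceding results, and the only point meriting a moment's attention is that Lemma~\ref{32} is stated as a genuine equivalence, so its ``only if'' part — nonemptiness of a quadrant of $L(A)$ forces nonemptiness of the corresponding quadrant of $A$ — is indeed at our disposal. One could alternatively observe that $A\subseteq L(A)$ already supplies the trivial implication and appeal to the structural description of $L(A)$ for the converse, but routing the argument through Lemma~\ref{32} keeps the proof minimal.
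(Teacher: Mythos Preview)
Your proposal is correct and follows exactly the route the paper indicates: the corollary is stated as an immediate consequence of Lemma~\ref{31} and Lemma~\ref{32}, and your argument simply spells out that composition (use Lemma~\ref{32} to pass from nonempty $(L(A))_p^{\varepsilon\delta}$ to nonempty $A_p^{\varepsilon\delta}$, then invoke Lemma~\ref{31}).
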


\begin{definition}
A path connected subset $A\subseteq\mathbb{R}_1^2$ is called $L-$invariant if the property $L(A)=A$ holds.
\end{definition}

By the previous corollary, we get the following lemma:

\begin{lemma}
Let $A\subseteq\mathbb{R}_1^2$ be an $L-$invariant subset and $p\in\mathbb{R}_1^2$. If every quadrant
of the point $p$ has nonempty intersection with the set $A$, then $p$ belongs to $A$.
\end{lemma}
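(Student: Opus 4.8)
The plan is to obtain this statement as an immediate consequence of Lemma~\ref{31} (equivalently, of the Corollary just above it) together with the defining property of $L$-invariance. First I would note that an $L$-invariant set $A$ is, by definition, path connected and satisfies $L(A)=A$, so no extra structural hypotheses are needed.

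Then I would argue as follows: assume every quadrant of $p$ meets $A$. Since $A$ is path connected and every quadrant of $p$ contains a point of $A$, Lemma~\ref{31} applies and gives $p\in L(A)$; but $L(A)=A$ by $L$-invariance, hence $p\in A$, as claimed. Alternatively one can invoke the Corollary directly: the hypothesis ``every quadrant of $p$ intersects $A$'' coincides with ``every quadrant of $p$ intersects $L(A)$'' once we substitute $L(A)=A$, and the Corollary then yields $p\in L(A)=A$.

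The only point that needs a word of attention is the path-connectedness assumption required by Lemma~\ref{31} and by the Corollary; this is automatic here, since the notion of $L$-invariance has been defined only for path connected subsets. I therefore do not anticipate any genuine obstacle: the statement is essentially a restatement of an already established result, under the identification of $A$ with $L(A)$ that $L$-invariance makes legitimate.
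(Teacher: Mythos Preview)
Your proposal is correct and matches the paper's approach: the paper simply records this lemma as an immediate consequence of the preceding Corollary, which is exactly your alternative argument, and your primary argument via Lemma~\ref{31} is the same reasoning one step earlier. Your observation that path-connectedness is built into the definition of $L$-invariance is exactly the point that makes the deduction automatic.
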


\begin{proposition}
\label{33}
For every path connected subset $A\subseteq\mathbb{R}_1^2$, the set $L(A)$ is $L-$invariant.
\end{proposition}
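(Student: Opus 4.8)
The plan is to deduce the idempotency of the double hatching $L$ directly from Proposition~\ref{305} (commutativity of $L_x$ and $L_y$ on path connected sets) together with the fact that each of the single hatchings $L_x$, $L_y$ is idempotent. Since $L(A)\subseteq L(L(A))$ is immediate (one always has $B\subseteq L_x(B)$ and $B\subseteq L_y(B)$, hence $B\subseteq L(B)$ for path connected $B$), the content is the reverse inclusion, and in fact equality will drop out formally.

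First I would record two preliminary observations. (a) \emph{The operators $L_x$ and $L_y$ preserve path-connectedness, so $L(A)$ is again a path connected set and $L(L(A))$ is defined.} Indeed, every horizontal segment $[A^y]$ that $L_x$ adjoins contains the nonempty slice $A^y\subseteq A$, so it is attached to $A$; any two points of $L_x(A)$ can therefore be joined by first moving within these segments into $A$ and then using a path in $A$. The same argument works for $L_y$, hence for the composition. (b) \emph{Each of $L_x$ and $L_y$ is idempotent.} For a height $y$ the horizontal slice of $L_x(A)$ is precisely the segment $[A^y]$, and the minimal segment containing a segment is that segment itself, so $L_x(L_x(A))=\bigcup_{y}[[A^y]]=\bigcup_{y}[A^y]=L_x(A)$; symmetrically $L_y\circ L_y=L_y$. (Observation (b) needs no connectivity hypothesis.)

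Next I would apply Proposition~\ref{305} to the path connected set $L_y(A)$: this gives $L_x(L_y(L_y(A)))=L_y(L_x(L_y(A)))$, and the left side equals $L_x(L_y(A))$ by idempotency of $L_y$. Hence
\[
L_y\bigl(L_x(L_y(A))\bigr)=L_x(L_y(A)).
\]
Unwinding the double hatching of $L(A)$ and using first this identity and then idempotency of $L_x$ yields
\[
L(L(A))=L_x\bigl(L_y(L(A))\bigr)=L_x\bigl(L_y(L_x(L_y(A)))\bigr)=L_x\bigl(L_x(L_y(A))\bigr)=L_x(L_y(A))=L(A),
\]
which is exactly the assertion that $L(A)$ is $L$-invariant.

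The argument is essentially bookkeeping, and the only place that calls for mild care is checking that the path-connectedness hypothesis of Proposition~\ref{305} is met at the single point where it is invoked, which is why observation (a) is needed; the identity $[[X]]=[X]$ is the other ingredient. I do not expect a genuine obstacle. As an alternative one could argue pointwise: given $p\in L(L(A))$, chase the definitions of $L_x$ and $L_y$ to exhibit a point of $L(A)$ in each of the four quadrants of $p$, and then invoke the Corollary above to conclude $p\in L(A)$; but this is longer than the commutativity route.
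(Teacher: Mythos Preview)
Your proof is correct and follows essentially the same route as the paper: reduce $L\circ L=L$ to the idempotency of $L_x$, $L_y$ together with the commutativity of Proposition~\ref{305}, and note that $L(A)$ remains path connected so that the statement is well-posed. The only cosmetic difference is that the paper applies Proposition~\ref{305} twice to $A$ itself (rewriting $L_x L_y L_x L_y = L_x L_y L_y L_x = L_x L_y L_x = L_x L_x L_y = L_x L_y$), whereas you apply it once to $L_y(A)$; your observation~(a) on preservation of path-connectedness is exactly what licenses that single application, and the paper invokes the same fact at the end of its argument.
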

\begin{proof}
It can easily be seen that $L_x\circ L_x=L_x$ and $L_y\circ
L_y=L_y$ on any subset of $\mathbb{R}_1^2$. On the other side,
$L_x$ and $L_y$ commute on path connected subsets by
Proposition~\ref{305}. So, for path connected subsets, we get

\begin{eqnarray*}
L\circ L&=&(L_x\circ L_y)\circ(L_x\circ L_y)=(L_x\circ
L_y)\circ(L_y\circ L_x)\\ &=& L_x\circ L_y\circ L_x= L_x\circ
L_x\circ L_y=L_x\circ L_y\\&=& L
\end{eqnarray*}

Since $A$ is path connected, $L(A)$ is also path connected, and
thus $L-$invariant.
\end{proof}

\section{Constructing the Tight Span of Path Connected Subsets}

We will show below (Theorem~\ref{35}) that the closure of the
double-hatching of any path connected subset of the Manhattan
plane is isometric to the tight span of this subset. We will prove
this via the hyperconvexity of the closure of any $L$-invariant
subset (Theorem~\ref{34}). We will first note some facts used in
the following proofs:

\begin{proposition}
\label{32.1} (\cite{esp}) Any hyperconvex metric space is
complete.
\end{proposition}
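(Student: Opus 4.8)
The plan is to prove completeness directly from the defining property of hyperconvexity, by realizing the limit of an arbitrary Cauchy sequence as a point lying in a suitable intersection of closed balls. So let $(x_n)_{n\in\mathbb{N}}$ be a Cauchy sequence in a hyperconvex space $X$, and set $r_n=\sup_{m\geq n}d(x_n,x_m)$. Since the sequence is Cauchy, each $r_n$ is a finite nonnegative real number and $r_n\to 0$ as $n\to\infty$.

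Next I would verify the compatibility condition needed to apply hyperconvexity to the family $(x_n)_{n\in\mathbb{N}}$ with radii $(r_n)_{n\in\mathbb{N}}$: for $n\leq m$ we have $d(x_n,x_m)\leq\sup_{k\geq n}d(x_n,x_k)=r_n\leq r_n+r_m$, and the case $m\leq n$ is symmetric; hence $d(x_n,x_m)\leq r_n+r_m$ for all indices $n,m$. By the definition of hyperconvexity, the intersection $\bigcap_{n\in\mathbb{N}}\overline{B}(x_n,r_n)$ is therefore nonempty.

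Finally, choosing any point $x$ in this intersection, we get $d(x,x_n)\leq r_n$ for every $n$, and since $r_n\to 0$ this means $x_n\to x$ in $X$. As the Cauchy sequence was arbitrary, $X$ is complete.

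I do not anticipate a real obstacle; the only delicate point is selecting the radii so that simultaneously $r_n\to 0$ and the ball-compatibility inequality $d(x_n,x_m)\leq r_n+r_m$ holds, and the choice $r_n=\sup_{m\geq n}d(x_n,x_m)$ is tailored precisely to achieve both. It is worth noting that the argument genuinely relies on the infinite-family formulation of hyperconvexity recalled in the Introduction (here an arbitrary countable index set is used), rather than on the pairwise ball-intersection property alone.
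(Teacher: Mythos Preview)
Your argument is correct and is exactly the standard proof: choosing $r_n=\sup_{m\geq n}d(x_n,x_m)$ guarantees both $r_n\to 0$ and the compatibility $d(x_n,x_m)\leq r_n+r_m$, so hyperconvexity produces a point in $\bigcap_n \overline{B}(x_n,r_n)$ which is then forced to be the limit of the sequence. Note, however, that the paper does not actually supply a proof of this proposition; it merely quotes it from \cite{esp}, so there is nothing in the paper to compare your approach against beyond observing that your proof is the classical one given in that reference.
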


\begin{proposition}
\label{32.2}
(\cite{kil})
Any hyperconvex metric space is strictly intrinsic.
\end{proposition}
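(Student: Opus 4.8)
Let $(X,d)$ be hyperconvex and fix two points $x,y\in X$; the goal is to produce a geodesic joining them. The plan has two stages: first I would extract \emph{metric convexity} of $X$ from the two-point instance of the hyperconvexity condition, and then I would promote metric convexity to an honest arc-length geodesic using the completeness of $X$ (Proposition~\ref{32.1}); this second stage is in essence Menger's theorem.

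For the first stage, fix $\lambda$ with $0\le\lambda\le d(x,y)$ and apply the hyperconvexity condition to the pair of points $x,y$ with the pair of radii $\lambda$ and $d(x,y)-\lambda$. The compatibility requirement reads $d(x,y)\le\lambda+(d(x,y)-\lambda)$, which holds (with equality), so the closed balls $\overline{B}(x,\lambda)$ and $\overline{B}(y,d(x,y)-\lambda)$ have a common point $z$. Then $d(x,z)\le\lambda$ and $d(z,y)\le d(x,y)-\lambda$, while $d(x,z)+d(z,y)\ge d(x,y)$ forces both inequalities to be equalities. Hence for every prescribed splitting of the distance $d(x,y)$ there is an intermediate point realizing it.

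For the second stage, write $D=d(x,y)$ and build $\gamma\colon[0,D]\to X$ with $\gamma(0)=x$ and $\gamma(D)=y$, defined first on the dyadic multiples of $D$ by repeated bisection: given a placement at level $n$ in which consecutive points are at distance $D/2^{n}$, use metric convexity to insert the midpoints at level $n+1$. An induction then shows that $d(\gamma(s),\gamma(t))=|s-t|$ for \emph{all} dyadic $s,t$ in $[0,D]$, not merely for neighbouring ones, because the equality $d(\gamma(0),\gamma(D))=D$ forces every triangle inequality among the selected points to be tight. Therefore $\gamma$ is $1$-Lipschitz on a dense subset of $[0,D]$; by completeness of $X$ it extends to a $1$-Lipschitz map on all of $[0,D]$, which still satisfies $d(\gamma(s),\gamma(t))=|s-t|$ and is thus a geodesic from $x$ to $y$. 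This shows that $X$ is strictly intrinsic.

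The main obstacle is the bookkeeping in the bisection step: one has to verify that inserting midpoints preserves the global ``isometric on the dyadics'' property and not merely the distances between adjacent points, and this is precisely where the observation that $d(\gamma(0),\gamma(D))=D$ forces all intervening triangle inequalities to be equalities is used. The remaining points — uniform continuity of $\gamma$ on the dyadics and the limit passage — are routine once completeness is available.
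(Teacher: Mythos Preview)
Your argument is correct and follows the standard route: extract metric convexity from the two-ball case of hyperconvexity, then run the Menger dyadic-bisection construction and close up using completeness (Proposition~\ref{32.1}). The bookkeeping you flag --- that the global endpoint constraint $d(\gamma(0),\gamma(D))=D$ forces all intermediate triangle inequalities to collapse to equalities --- is exactly the right observation, and the extension from dyadics to $[0,D]$ by completeness is routine.

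As for comparison with the paper: there is nothing to compare. The paper does not prove Proposition~\ref{32.2}; it merely records it with a citation to~\cite{kil} and then invokes it in the proof of Theorem~\ref{35}. So your sketch is not an alternative to the paper's proof but rather a self-contained justification of a result the paper imports. If anything, your write-up makes the manuscript more self-contained, since the only external inputs you use --- hyperconvexity and completeness --- are already stated in the paper.
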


\begin{lemma}
\label{at5} Let $(X,d_X)$ and $(Y,d_Y)$ be metric spaces with
$X\cap Y\neq\emptyset$, and assume $d_X|_{X\cap Y}=d_Y|_{X\cap
Y}$. Then there exists  a metric $d$ on $X\cup Y$, such that
$d|_X=d_X$ and $d|_Y=d_Y$.
\end{lemma}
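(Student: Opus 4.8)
The plan is to define $d$ by the standard ``gluing along the common part'' construction. Put $d(a,b)=d_X(a,b)$ when $a,b\in X$, $d(a,b)=d_Y(a,b)$ when $a,b\in Y$, and, for a mixed pair $a\in X$, $b\in Y$,
\[
d(a,b)=\inf_{z\in X\cap Y}\bigl(d_X(a,z)+d_Y(z,b)\bigr),
\]
the infimum being taken over the nonempty set $X\cap Y$, so that it is a well-defined nonnegative real. First I would verify that the clauses are consistent and that $d$ extends both given metrics: if $a,b\in X\cap Y$ then $d_X(a,b)=d_Y(a,b)$ by hypothesis, and by the triangle inequalities in $X$ and in $Y$ the displayed infimum is attained at $z=b$ and equals this common value; the same computation shows that as soon as one endpoint lies in $X\cap Y$ the mixed formula returns $d_X$, respectively $d_Y$. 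Hence $d|_X=d_X$ and $d|_Y=d_Y$.

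Next I would check the metric axioms. Symmetry, nonnegativity and $d(a,a)=0$ are immediate from the corresponding properties of $d_X$ and $d_Y$ and the symmetry of the defining formula in $a$ and $b$. For the triangle inequality $d(a,c)\leq d(a,b)+d(b,c)$ I would proceed by cases according to which of $X$, $Y$ contains each of $a,b,c$: when all three lie in $X$ (or all three lie in $Y$) this is just the triangle inequality of $d_X$ (of $d_Y$), and in each remaining case one selects, for each occurrence of the mixed formula, a point of $X\cap Y$ realising the relevant infimum up to an $\varepsilon>0$, chains these points by means of the triangle inequalities of $d_X$ and $d_Y$ together with the identity $d_X=d_Y$ on $X\cap Y$, and then lets $\varepsilon\to 0$. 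This is bookkeeping rather than a real difficulty.

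The hard part will be positive-definiteness, i.e. showing that $d(a,b)=0$ forces $a=b$. The only nontrivial configuration is $a\in X\setminus Y$ and $b\in Y\setminus X$ with $\inf_{z\in X\cap Y}\bigl(d_X(a,z)+d_Y(z,b)\bigr)=0$; then there is a sequence $(z_n)$ in $X\cap Y$ with $d_X(a,z_n)\to 0$ and $d_Y(z_n,b)\to 0$, which therefore converges to $a$ in $X$ and to $b$ in $Y$ and is in particular Cauchy in $X\cap Y$. To turn this into a contradiction one uses that $X\cap Y$ is closed in $X$, so that the limit $a$ of $(z_n)$ would have to lie in $X\cap Y$ — impossible. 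This closedness is precisely what makes the step work, and it is available in the situations where the lemma is applied (there $X\cap Y$ is closed, often just a single point). Once it is in hand, $d$ is a genuine metric on $X\cup Y$ with the required restrictions.
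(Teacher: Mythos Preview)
The paper states this lemma without proof, so there is no argument to compare against. Your gluing construction is the natural one, and your treatment of consistency, symmetry, and the triangle inequality is fine. You are also right to isolate positive-definiteness as the genuine obstacle---and right that it cannot be closed without an extra hypothesis. Indeed, the lemma is \emph{false as stated}. Take $X=[0,1]$ with the usual metric and $Y=(0,1]\cup\{p\}$ (with $p$ an abstract point) where $d_Y$ agrees with $|\cdot|$ on $(0,1]$ and $d_Y(p,x)=x$ for $x\in(0,1]$; one checks easily that $d_Y$ is a metric. Then $X\cap Y=(0,1]$ and the two metrics agree there, yet any metric $d$ on $X\cup Y$ extending both would satisfy $d(0,p)\le d_X(0,x)+d_Y(x,p)=2x$ for every $x\in(0,1]$, forcing $d(0,p)=0$ with $0\neq p$.

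So your admission that $X\cap Y$ must be closed in $X$ (or, symmetrically, in $Y$) is not a defect of your proof but a necessary correction to the statement. With that hypothesis your argument is complete: if $a\in X\setminus Y$, $b\in Y\setminus X$ and $z_n\in X\cap Y$ has $d_X(a,z_n)+d_Y(z_n,b)\to 0$, then $z_n\to b$ in $Y$, and closedness of $X\cap Y$ in $Y$ forces $b\in X\cap Y$, a contradiction. In the paper's sole use of the lemma one glues $\overline{A}\cup\{q\}$ to $\mathbb{R}_1^2$ along $X\cap Y=\overline{A}$, which is closed in $\mathbb{R}_1^2$, so the needed hypothesis does hold there; note however that the overlap is $\overline{A}$, not ``just a single point'' as your parenthetical suggests.
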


\begin{theorem}
\label{34}
Let $A\subseteq\mathbb{R}_1^2$ be an $L-$invariant subset. Then $\overline{A}$ is hyperconvex.
\end{theorem}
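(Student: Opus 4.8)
The plan is to verify hyperconvexity directly from the ball-intersection definition, using the very convenient characterization recalled in the introduction: a metric space is hyperconvex iff every isometric one-point extension admits a nonexpansive retraction. Since $\overline{A}$ is complete (it is a closed subset of $\mathbb{R}_1^2$), and since hyperconvexity of a metric space reduces to hyperconvexity tested on finitely many balls together with completeness (a standard fact), one is really reduced to a finite ball-intersection problem in $\overline{A}$. I would actually try to work with the one-point-extension form: given $y\notin\overline{A}$ and an isometric embedding $\overline{A}\hookrightarrow\overline{A}\cup\{y\}$, produce a point $p\in\overline{A}$ with $d_1(p,a)\le d(y,a)$ for all $a\in\overline{A}$, which exhibits the nonexpansive retraction.

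The key geometric step is the following dichotomy for the radius function $r(a):=d(y,a)$ on $\overline{A}$, which satisfies $r(a)+r(a')\ge d_1(a,a')$ for all $a,a'$. Consider the four "corner" functionals determined by the $\ell^1$ metric: for $a=(a_1,a_2)$ and signs $\varepsilon,\delta$, the quantity $\varepsilon a_1+\delta a_2$. The condition $r(a)+r(a')\ge |a_1-a_1'|+|a_2-a_2'|$ says precisely that the four numbers $\sup_a(\varepsilon a_1+\delta a_2 - r(a))$, taken over the four sign patterns, are pairwise compatible in the sense that opposite ones sum to $\le 0$; equivalently there is a point $p=(p_1,p_2)$ with $\varepsilon p_1+\delta p_2\ge \varepsilon a_1+\delta a_2-r(a)$ for every $a$ and every sign pattern — i.e. $|p_1-a_1|+|p_2-a_2|\le r(a)$. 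This is just the (easy) hyperconvexity of $\mathbb{R}_1^2$ itself. So the candidate retract $p$ exists in the ambient plane; the whole difficulty is to show $p$ can be chosen inside $\overline{A}$.

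That is exactly where $L$-invariance enters, via the Corollary and the last Lemma before the theorem: if every quadrant of $p$ meets $A$ then $p\in A\subseteq\overline{A}$. So I would argue that the plane-candidate $p$ constructed above either already lies in $\overline{A}$, or can be pushed to a nearby boundary point of $\overline{A}$ that still satisfies all the inequalities; concretely, I expect to show that for a suitable choice among the (generally non-unique) plane-candidates, each of the four quadrants $Q_p^{\varepsilon\delta}$ contains points of $\overline{A}$ — because the supremum defining that corner functional is approached along points of $A$ sitting in $Q_p^{\varepsilon\delta}$ — whence the last Lemma gives $p\in\overline{A}$. The compactness/closure subtleties (the suprema may only be approached, not attained) are handled by passing to $\overline{A}$ and using completeness, so one gets a genuine point of $\overline{A}$ in the limit.

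The main obstacle I anticipate is precisely this last point: the plane-candidate $p$ need not be unique, and a bad choice can fail to have all quadrants populated by $\overline{A}$ (think of $p$ lying in the interior of a "hole" of $A$). The crux of the argument will be to exploit path-connectedness of $A$ — through the machinery already set up in Lemma~\ref{31} and Proposition~\ref{305} — to select the candidate $p$ so that the two "diagonal pair" constraints coming from the path-connected structure force all four quadrants to intersect $\overline{A}$; alternatively, to run the one-point-extension retraction argument and invoke the Corollary directly on $L(A)=\overline{A}\cap(\text{its }L\text{-closure})$. I would spend most of the write-up making that selection precise and checking the nonexpansiveness inequalities survive the passage to the closure.
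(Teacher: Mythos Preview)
Your high-level framework matches the paper's: use the one-point-extension characterization of injectivity, then exploit the injectivity of $\mathbb{R}_1^2$ to reduce an abstract extension $\overline{A}\cup\{y\}$ to a concrete point $p\in\mathbb{R}_1^2$ with $d_1(p,a)\le d(y,a)$ for all $a\in\overline{A}$. That reduction is exactly how the paper begins.

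The genuine gap is everything after that. Your proposed mechanism --- ``choose the plane candidate $p$ so that each quadrant $Q_p^{\varepsilon\delta}$ meets $\overline{A}$, then invoke the Lemma to conclude $p\in\overline{A}$'' --- is circular. For an $L$-invariant set, the Lemma says precisely that the points whose four quadrants all meet $A$ are the points of $A$; so ``select $p$ with all four quadrants populated'' is just a restatement of ``select $p\in A$'', which is the very thing to be proved. Your heuristic that the corner suprema $\sup_a(\varepsilon a_1+\delta a_2-r(a))$ are approached along points in $Q_p^{\varepsilon\delta}$ only bites when all four corner constraints are simultaneously tight at $p$, and that is generically impossible (four equalities, two degrees of freedom); when a corner is slack, nothing forces that quadrant to meet $A$. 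You acknowledge this obstacle but do not resolve it, and the vague appeal to path-connectedness via Lemma~\ref{31} and Proposition~\ref{305} does not help, since those results again only apply once all four quadrants are already populated.

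What the paper actually does at this point is an explicit case analysis on the plane point $p\notin\overline{A}$: since $A$ is $L$-invariant and $p\notin A$, at most three quadrants of $p$ meet $A$. In the three-quadrant case one slides $p$ diagonally (in the direction of the missing quadrant, say along $p+(1,1)t$) until the corresponding elbow $x_{p+(1,1)t}^-\cup y_{p+(1,1)t}^-$ first touches $\overline{A}$; the limit point $q\in\overline{A}$ is then checked by elementary $\ell^1$ geometry to satisfy $d_1(q,a)\le d_1(p,a)$ for all $a\in\overline{A}$. The two-quadrant and one-quadrant cases are handled by an analogous slide followed by a reduction to the three-quadrant case. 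This diagonal-slide construction is the actual content of the proof, and nothing in your proposal corresponds to it.

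A minor side remark: the claim that ``hyperconvexity reduces to finitely many balls together with completeness'' is not a standard fact and is not true in general; fortunately you do not use it.
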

\begin{proof}
To show that $\overline{A}$ is hyperconvex, we have to find a nonexpansive retraction $r_q:\overline{A}\cup\{q\}\rightarrow \overline{A}$ where
$\overline{A}\cup\{q\}$ is any one-point extension of the metric space $\overline{A}$. We can however assume that the point $q$ belongs to
$\mathbb{R}_1^2\setminus \overline{A}$ and $\overline{A}\cup\{q\}$ carries the induced metric from the Manhattan plane. The reason for this
simplification is that any metric on $\overline{A}\cup\{q\}$ can be extended to $\mathbb{R}_1^2\cup\{q\}$ by Lemma~\ref{at5} and there exists
a nonexpansive retraction $r:\mathbb{R}_1^2\cup\{q\}\rightarrow \mathbb{R}_1^2$ since $\mathbb{R}_1^2$ is injective. If we now
denote the point $r(q)\in \mathbb{R}_1^2$ by $p$, any nonexpansive retraction $\overline{A}\cup\{p\}\rightarrow \overline{A}$ can be combined with
the retraction $r$ to yield a nonexpansive retraction $\overline{A}\cup\{q\}\rightarrow \overline{A}$.

So, let us given any point $p=(p_1,p_2)\in\mathbb{R}_1^2\setminus\overline{A}$. We shall construct a nonexpansive retraction
$\overline{A}\cup\{p\}\rightarrow\overline{A}$ by considering three different cases:

1) Three Quadrants Case:

Assume the set $A$ has nonempty intersections with exactly three quadrants of the point $p$. Without loss of generality, we assume the quadrants
to be $Q_p^{++}$, $Q_p^{+-}$ and $Q_p^{-+}$.

Since $p\in \mathbb{R}_1^2\setminus\overline{A}$, there exists an interval $[0,\varepsilon)$ such that the elbows $x_{p+(1,1)t}^-\cup y_{p+(1,1)t}^-$
of the points $p+(1,1)t$ for $t\in[0,\varepsilon)$ does not intersect $A$. Otherwise we would have points $p+(1,1)t$, for $t$ small enough
such that four quadrants of the points $p+(1,1)t$ would intersect $A$ forcing these points to belong to the set $A$. So the point $p$ itself
would belong to $\overline{A}$, contrary to the assumption.

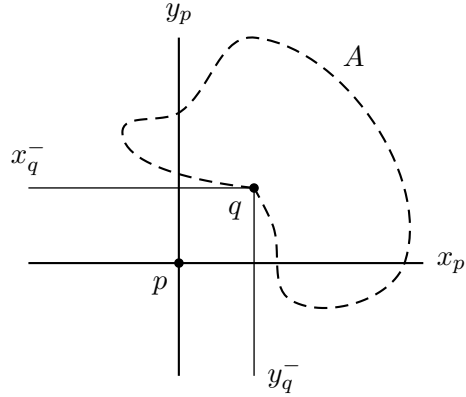
\begin{figure}[h]
\begin{center}
\begin{pspicture*}(-2.25,-1.75)(4,3.5)
\psline(0,-1.5)(0,3) \psline(-2,0)(3.25,0)
\psline[linewidth=0.5pt](-2,1)(1,1)(1,-1.5) \psdots(0,0)(1,1)
\uput[d](-0.25,0){$p$} \uput[d](0.75,1){$q$}
\pscurve[linestyle=dashed](1,1)(-0.75,1.75)(0,2)(1,3)(3,0)(1.5,-0.5)(1.25,0.5)(1,1)
\uput[u](0,3){$y_p$} \uput[r](3.25,0){$x_p$}
\uput[r](1,-1.5){$y_q^-$} \uput[u](-2,1){$x_q^-$}
\uput[r](2,2.75){$A$}
\end{pspicture*}
\caption{The three quadrants case.}
\label{3f6}
\end{center}
\end{figure}

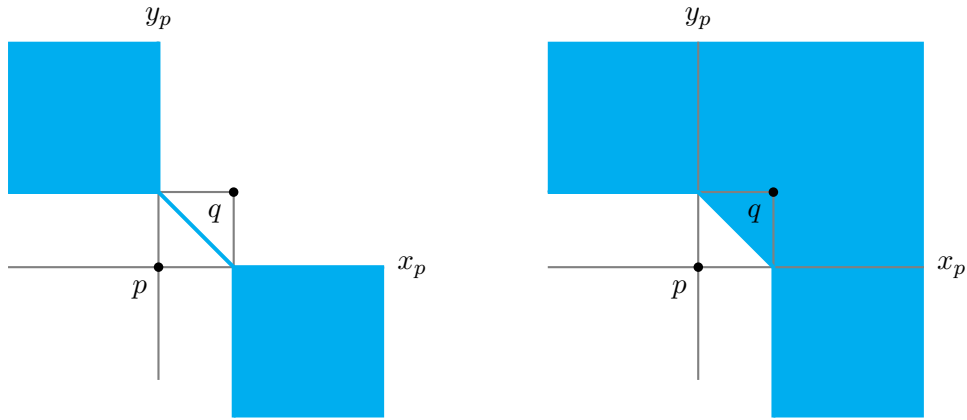
\begin{figure}[h!]
\begin{minipage}[t]{0.47\linewidth}
\centering
\begin{pspicture*}(-2.15,-2)(4,3.5)
\psline[linecolor=gray](0,-1.5)(0,3) \psline[linecolor=gray](-2,0)(3,0)
\psline[linecolor=gray](-2,1)(1,1)(1,-1.5) \psdots(0,0)(1,1)
\pspolygon*[linecolor=cyan,fillcolor=cyan](0,3)(0,1)(-2,1)(-2,3)(0,3)
\psline[linewidth=1.5pt, linecolor=cyan](0,3)(0,1)(-2,1)
\psline[linewidth=1.5pt, linecolor=cyan](0,1)(1,0)
\pspolygon*[linecolor=cyan](3,0)(1,0)(1,-2)(3,-2)(3,0)
\psline[linewidth=1.5pt, linecolor=cyan](3,0)(1,0)(1,-2)
\uput[d](-0.25,0){$p$} \uput[d](0.75,1){$q$}
\uput[u](0,3){$y_p$} \uput[r](3,0){$x_p$}
\end{pspicture*}

\end{minipage}
\begin{minipage}[t]{0.47\linewidth}
\centering
\begin{pspicture*}(-3.25,-2)(4,3.5)
\pspolygon*[linecolor=cyan](-2,1)(0,1)(1,0)(1,-2)(3,-2)(3,3)(-2,3)(-2,1)
\psline[linecolor=gray](0,-1.5)(0,3) \psline[linecolor=gray](-2,0)(3,0)
\psline[linecolor=gray](-2,1)(1,1)(1,-1.5) \psdots(0,0)(1,1)
\psline[linewidth=1.5pt, linecolor=cyan](-2,1)(0,1)(1,0)(1,-2)
\uput[d](-0.25,0){$p$} \uput[d](0.75,1){$q$}
\uput[u](0,3){$y_p$} \uput[r](3,0){$x_p$}
\end{pspicture*}

\end{minipage}

\caption{The shaded (blue) region denotes the set of points with equal distance to the points $p$ and $q$. Note however that the ``true'' middle points (i.e. with distance $\frac{1}{2}\cdot d_1(p,q)$ to $p$ and $q$) are the points on the anti-diagonal segment connecting the two quarter planes (left). The shaded region denotes the set of points whose distance to $q$ is no greater than the distance to $p$ (right).}
\label{3f7}

\end{figure}

Now, if we take the supremum of such $\varepsilon$, say
$\varepsilon_0$, then the point $q=p+(1,1)\varepsilon_0$ (see
Figure~\ref{3f6}) must belong to the set $\overline{A}$ by the
same reason.

Now we can define a nonexpansive retraction
\[r:\overline{A}\cup\{p\}\rightarrow \overline{A},\]
\[
r(x)=
\left\{
\begin{array}{rrr}
x&,&\mbox{for } x\in \overline{A}\\
q&,&\mbox{for } x=p \ ,\\
\end{array}
\right.
\]
as easily can be verified by simple geometry in taxicab metric
(see Figure~\ref{3f7}).

2) Two Quadrants Case:

Assume that the set $A$ intersects exactly two quadrants. Since $p\notin A$, these quadrants can not be in diagonal position
by the connectedness of $A$. So let us assume without loss of generality that these quadrants are $Q_p^{++}$ and $Q_p^{-+}$.

Let us define
\[
\inf\{t>0|\ x_{(p_1,p_2+t)}^+\cap A\neq \varnothing\}=t_0
\]
and
\[
\inf\{t>0|\ x_{(p_1,p_2+t)}^-\cap A\neq \varnothing\}=t_1
\]

Without loss of generality we can assume to $t_0\leq t_1$.
Consider the point $q=(p_1,p_2+t_0)$ (see Figure~\ref{3f9}) and
define the map
\[r:\overline{A}\cup\{p\}\rightarrow\overline{A}\cup\{q\}, \]
\[
r(x)=
\left\{
\begin{array}{rrr}
x&,&\mbox{for } x\in \overline{A}\\
q&,&\mbox{for } x=p\ .\\
\end{array}
\right.
\]

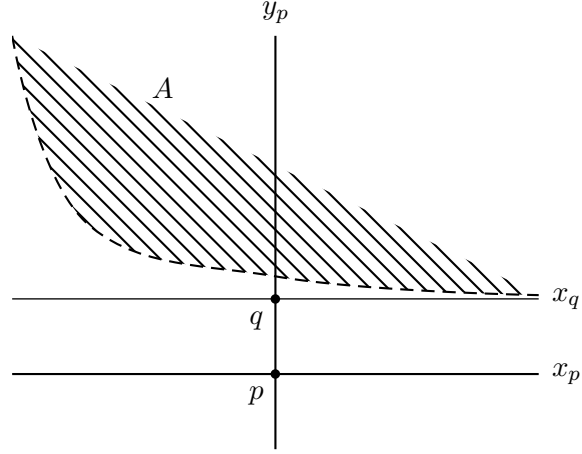
\begin{figure}[h]
\begin{center}
\begin{pspicture*}(-3.5,-1)(4.25,5)
\psline(-3.5,0)(3.5,0) \psline(0,-1)(0,4.5) \uput[r](3.5,0){$x_p$}
\uput[u](0,4.5){$y_p$} \psline[linewidth=0.5pt](-3.5,1)(3.5,1)
\uput[r](3.5,1){$x_q$}
\pscurve[linestyle=dashed,fillstyle=vlines,fillcolor=gray](-3.5,4.5)(-2.5,2)(0,1.3)(3.5,1.05)
\uput[u](-1.5,3.5){$A$} \uput[d](-0.25,0){$p$}
\uput[d](-0.25,1){$q$} \psdots(0,0)(0,1)
\end{pspicture*}
\caption{The two quadrants case.}
\label{3f9}
\end{center}
\end{figure}

One can easily verify that this map is a nonexpansive map. If $q$ belongs to the set $\overline{A}$, then we are done.

Now consider the case $q\notin \overline{A}$. We first claim that in this case it must hold $t_0<t_1$. Because otherwise any horizontal
segment with ordinate close enough to $p_2+t_0$ from above would intersect the set $A$ producing a point just above the point $q$
and so $q$ would belong to $\overline{A}$.

Since $\overline{A}$ is closed, there exists an open disc
$B(q,\delta)$ disjoint from $\overline{A}$. If we now choose
$\varepsilon<\min\{\delta,t_1-t_0\}$ then exactly three quadrants
of the point $q'=(p_1+\varepsilon,p_2+t_0+\varepsilon)$ intersects
the set $A$ and all points of $\overline{A}$ are closer to the
point $q'$ than to the point $q$. So we can first define a
nonexpansive map $r':\overline{A}\cup\{q\}\rightarrow
\overline{A}\cup\{q'\}$ and then, by applying the three quadrant
case construct a nonexpansive retraction
$r'':\overline{A}\cup\{q'\}\rightarrow \overline{A}$. By combining
these three retractions we get a nonexpansive retraction from
$\overline{A}\cup\{p\}$ to $\overline{A}$.

3) One Quadrant Case:

Assume that only one quadrant of the point $p$ intersects the set
$A$ and let it be $Q_p^{++}$ without loss of generality. The elbow
$x_p^+\cup y_p^+$ of the point $p$ can not intersect the set $A$.
Now consider intervals $[0,\varepsilon)$ such that the
corresponding elbows of the points $p+(1,1)t$ does not intersect
the set $A$ for all $t\in [0,\varepsilon)$. Let $\varepsilon_0$
denote the supremum of such $\varepsilon$ (which might be zero in
special examples) and let $q=p+(1,1)\varepsilon_0$ (see
Figure~\ref{3f10}). The map

\begin{figure}[h]
\begin{center}
\begin{pspicture*}(-1,-1)(5.25,5.25)
\psline(-1,0)(4.5,0) \psline(0,-1)(0,4.5) \uput[r](4.5,0){$x_p$}
\uput[u](0,4.5){$y_p$} \psline[linewidth=0.5pt](4.5,1)(1,1)(1,4.5)
\uput[r](4.5,1){$x_q^+$} \uput[u](1,4.5){$y_q^+$}
\pscurve[linestyle=dashed,fillstyle=vlines,fillcolor=gray](1.075,4.5)(1.75,2)(2.5,1.5)(3,3)(4.5,4.5)
\uput[r](3.5,3.5){$A$} \uput[d](-0.25,0){$p$}
\uput[d](0.75,1){$q$} \psdots(0,0)(1,1)
\end{pspicture*}
\caption{The one quadrant case.}
\label{3f10}
\end{center}
\end{figure}
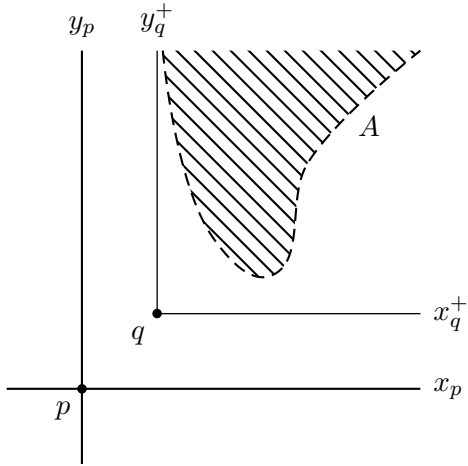

\[r:\overline{A}\cup\{p\}\rightarrow\overline{A}\cup\{q\} , \]
\[
r(x)=
\left\{
\begin{array}{rrr}
x&,&\mbox{for } x\in \overline{A}\\
q&,&\mbox{for } x=p\\
\end{array}
\right.
\]
is nonexpansive.

If $q\in \overline{A}$, then we are done, as the map $r$ is then a
nonexpansive retraction on to $\overline{A}$. Now assume
$q\notin\overline{A}$. If we move the point $q$ to a nearby
$q'=q+(1,1)\delta$ for small $\delta$, then at least two and at
most three quadrants of the point $q'$ intersect the set $A$ and
we can apply the previous cases.
\end{proof}

\begin{theorem}
\label{35}
Let $A\subseteq\mathbb{R}_1^2$ be a path connected subset. Then, $\overline{L(A)}$ is isometric to the tight span $T(A)$ of $A$.
\end{theorem}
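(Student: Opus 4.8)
The plan is to deduce Theorem~\ref{35} from Theorem~\ref{34} together with the characterization of the tight span quoted from \cite{kil} in the introduction, namely that any closed, geodesically convex, and (with respect to these properties) minimal subset $B\subseteq\mathbb{R}_1^2$ containing $A$ is isometric to $T(A)$. So it suffices to verify that $B:=\overline{L(A)}$ has exactly these three properties: it contains $A$, it is closed, it is geodesically convex, and no proper subset of it containing $A$ is both closed and geodesically convex.

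First I would record the easy points. We have $A\subseteq L(A)\subseteq\overline{L(A)}$ since each hatching operator only enlarges the set, so $B$ contains $A$, and $B$ is closed by construction. For geodesic convexity: by Proposition~\ref{33} the set $L(A)$ is $L$-invariant and path connected, hence by Theorem~\ref{34} $B=\overline{L(A)}$ is hyperconvex, and by Proposition~\ref{32.2} a hyperconvex metric space is strictly intrinsic; thus between any two points of $B$ there is a geodesic, but I must still check this geodesic can be taken \emph{inside} $B$ — which is exactly what strict intrinsicness of the metric space $(B,d_1|_B)$ gives, since the geodesic furnished there is a path in $B$.

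The substantive part is minimality: if $B'$ is closed, geodesically convex, and $A\subseteq B'\subseteq B$, I must show $B'=B$. The idea is that geodesic convexity of $B'$ forces it to be closed under the hatching operators. Indeed, if $a^y, b^y\in B'$ lie on a common horizontal line, then a $d_1$-geodesic between them inside $B'$ must (since any monotone staircase between two points on the same horizontal line has length $\ge$ the horizontal distance, with equality only for the straight segment — here I would invoke that the only geodesics between horizontally aligned points are contained in that horizontal line) contain the whole segment $[a^y,b^y]$; running over all such pairs shows $L_x(A)\subseteq B'$ and likewise $L_y(A)\subseteq B'$, hence $L(A)=(L_x\circ L_y)(A)\subseteq B'$, and since $B'$ is closed, $B=\overline{L(A)}\subseteq B'$. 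Combined with $B'\subseteq B$ this yields $B'=B$, establishing minimality; invoking \cite{kil} then gives $B\cong T(A)$.

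The main obstacle I anticipate is the claim that a $d_1$-geodesic between two horizontally (or vertically) aligned points is forced to lie on the connecting segment — in the Manhattan plane geodesics between generic points are highly non-unique (any monotone staircase works), so I need the degenerate case where one coordinate difference is zero, for which any detour strictly increases length. I would prove this by noting that a rectifiable path from $(x_1,y_0)$ to $(x_2,y_0)$ has $d_1$-length $\ge \int |dx| + \int |dy| \ge |x_1-x_2| + 0$ with equality forcing $\int|dy|=0$ and the $x$-coordinate monotone, i.e. the path is the horizontal segment up to reparametrization. A secondary point to handle carefully is that geodesic convexity of $B'$ as stated means every two of its points are joined by \emph{some} geodesic lying in $B'$; I must make sure this single geodesic, whatever it is, still contains the segment, which is immediate once the uniqueness-on-a-line fact above is in hand.
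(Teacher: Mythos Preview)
Your argument is correct and uses the same two key ingredients as the paper --- Theorem~\ref{34} to get hyperconvexity of $\overline{L(A)}$, and the uniqueness of $d_1$-geodesics between horizontally/vertically aligned points --- but you assemble them differently. The paper does not invoke the \cite{kil} characterization directly; instead it argues that an isometric copy of $T(A)$ inside $\mathbb{R}_1^2$ is geodesically convex and hence must contain $L(A)$, whence $T(A)=T(L(A))$, and then identifies $T(L(A))$ with $\overline{L(A)}$ by noting that the latter is hyperconvex (Theorem~\ref{34}) while the former is complete (Proposition~\ref{32.1}) and hence closed. Your route is arguably more transparent, since it makes the minimality explicit rather than passing through the intermediate equality $T(A)=T(L(A))$; the paper's route is a touch shorter once one is willing to take that step for granted. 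One small point of exposition: from ``$L_x(A)\subseteq B'$ and $L_y(A)\subseteq B'$'' alone you cannot conclude $L_x(L_y(A))\subseteq B'$; what your aligned-geodesic argument actually gives is the stronger fact $L_x(B')\subseteq B'$ and $L_y(B')\subseteq B'$, from which $L_x(L_y(A))\subseteq L_x(B')\subseteq B'$ follows immediately --- you have this, just say it in that order.
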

\begin{proof}
A geodesically convex subset $B\subseteq\mathbb{R}_1^2$ containing
the set $A$ has to contain also the double-hatching $L(A)$ of $A$,
since during the process of horizontal or vertical hatching the
unique geodesics between two horizontally or vertically positioned
points of $A$ are added to the set $A$. As $\mathbb{R}_1^2$ is
hyperconvex, it contains an isometric copy of the tight span
$T(A)$ and since $T(A)$ is strictly intrinsic (by
Proposition~\ref{32.2}), and thus geodesically convex in
$\mathbb{R}_1^2$, it has to contain $L(A)$: $L(A)\subseteq T(A)$.
Consequently $T(A)=T(L(A))$. By Proposition~\ref{33} and
Theorem~\ref{34}, $\overline{L(A)}$ is hyperconvex. Now by
Proposition~\ref{32.1}, $T(L(A))=\overline{L(A)}$. Thus we get
$T(A)=\overline{L(A)}$.
\end{proof}

\begin{corollary}\label{cor}
Let $A\subseteq\mathbb{R}_1^2$, be a compact and path connected subset. Then, $L(A)$ is isometric to the tight span $T(A)$ of $A$.
\end{corollary}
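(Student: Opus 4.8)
The plan is to deduce the corollary directly from Theorem~\ref{35}. That theorem already identifies $T(A)$ with $\overline{L(A)}$ for every path connected $A$, so it suffices to prove that when $A$ is in addition compact, the set $L(A)$ is already closed, i.e. $\overline{L(A)}=L(A)$. In fact I would prove the slightly stronger statement that $L(A)$ is compact.

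First I would show that $L_y(A)$ is compact. Boundedness is immediate: for each $x$, the slice $A_x=A\cap(\{x\}\times\mathbb{R})$ is closed and bounded, hence compact, so the minimal vertical segment $[A_x]$ has both endpoints in $A_x\subseteq A$; consequently $L_y(A)$ lies inside the closed bounding rectangle of $A$. For closedness, take a sequence $q_n=(x_n,y_n)\in L_y(A)$ with $q_n\to q=(x,y)$. Each $q_n$ lies on some segment $[A_{x_n}]$, so there are points $a_n,b_n\in A$ with first coordinate $x_n$ and with $q_n$ between them vertically. Passing to a subsequence, $a_n\to a$ and $b_n\to b$ with $a,b\in A$ by compactness; since $x_n\to x$, both $a$ and $b$ have first coordinate $x$, so $a,b\in A_x$, and $q$ lies vertically between them, whence $q\in[A_x]\subseteq L_y(A)$. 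Thus $L_y(A)$ is closed and bounded, hence compact; it is also path connected, since $A$ is (this is the same observation already used in the proof of Proposition~\ref{33}).

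Next I would rerun exactly the same argument with $L_x$ in place of $L_y$ and with $B:=L_y(A)$ in place of $A$: since $B$ is compact, $L_x(B)$ is contained in the bounding rectangle of $B$ and is closed by the identical subsequence argument, now applied to the horizontal segments $[B^y]$. Therefore $L(A)=(L_x\circ L_y)(A)=L_x(L_y(A))$ is compact, in particular closed, so $\overline{L(A)}=L(A)$. Combining this with Theorem~\ref{35} yields $T(A)=\overline{L(A)}=L(A)$, which is the assertion.

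The only genuine content is the closedness of $L_y(A)$, and then of $L_x(L_y(A))$, and this is precisely the step where compactness of $A$ is used; for a path connected set that is not closed the hatching can genuinely fail to be closed (e.g. a spiral approaching but never reaching a limit circle), which is exactly why the general Theorem~\ref{35} must pass to the closure. Everything else — boundedness and the reduction to Theorem~\ref{35} — is routine, so I expect the compactness/subsequence argument to be the main (and essentially only) obstacle.
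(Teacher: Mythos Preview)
Your proposal is correct and follows exactly the route the paper takes: the paper's proof of the corollary consists of the single sentence ``It can easily be shown that for a compact $A$, $L(A)$ will also be compact,'' and you have simply supplied the (standard subsequence) details of that claim before invoking Theorem~\ref{35}.
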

\begin{proof}
It can easily be shown that for a compact $A$, $L(A)$ will also be
compact.
\end{proof}

For an example of Corollary \ref{cor} see Figure~\ref{vh}.

\begin{figure}[h]
\includegraphics[width=0.9\textwidth]{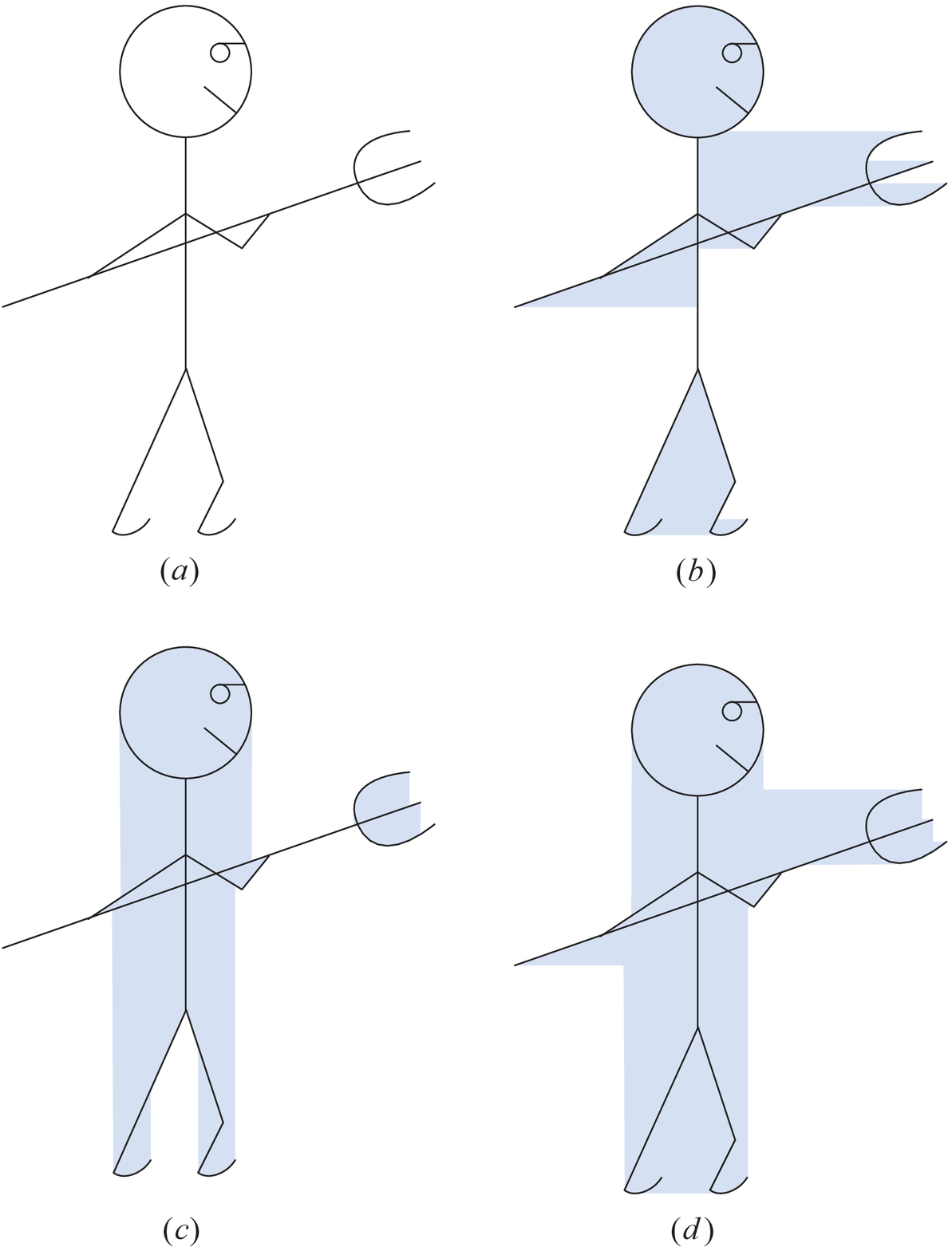}
\caption{Tight span of a compact and path connected subset $A$.
\newline a) A compact and path connected subset $A$ of $\mathbb{R}_1^2$ (consisting of the contour).
\newline b) The horizontal hatching $L_x(A)$ of $A$.
\newline c) The vertical hatching $L_y(A)$ of $A$.
\newline d) The double hatching $L_x(L_y(A))=L_y(L_x(A))$ of $A$, which is isometric to the tight span $T(A)$ of
$A$.}\label{vh}
\end{figure}

\clearpage

\end{document}